\def\XXint#1#2#3{{\setbox0=\hbox{$#1{#2#3}{\int}$}
\vcenter{\hbox{$#2#3$}}\kern-.5\wd0}}
\DeclareMathOperator\dive {div}
\newcommand{\divg}{\dive_{g}}
 \newcommand{\R}{\mathbb R}
\newcommand{\spt}{\,{\rm spt}}
\newtheorem{theorem}{Theorem}[section]
 \newtheorem{remark}[theorem]{Remark}
\newtheorem{lemma}[theorem]{Lemma}
\newtheorem{definition}[theorem]{Definition}
 \def\beqs{\begin{eqnarray*}}
 \def\enqs{\end{eqnarray*}}
 \def\beq{\begin{eqnarray}}
 \def\enq{\end{eqnarray}}
\begin{document}
\title{A scaling approach to \\
Caffarelli-Kohn-Nirenberg inequality}
%
\author{Aldo Bazan$^1$, Wladimir Neves$^1$}

\date{}

\maketitle

\footnotetext[1]{ Instituto de Matem\'atica, Universidade Federal
do Rio de Janeiro, C.P. 68530, Cidade Universit\'aria 21945-970,
Rio de Janeiro, Brazil. E-mail: {\sl aabp2003@pg.im.ufrj.br,
wladimir@im.ufrj.br.}
\newline
\textit{To appear in:}
\newline
\textit{Key words and phrases.} Caffarelli-Kohn-Nirenberg
inequality, Sobolev inequality, Hardy inequality, degenerated and
singular elliptic partial differential equations.}

%

%
\begin{abstract}
We consider the general 
Caffarelli-Kohn-Nirenberg
inequality in the Euclidean and Remannian setting. 
From a new parameter introduced, the proof of the former case,
follows by simple interpolation arguments and H\"{o}lder's inequality.
Moreover, the ranges of this convenient parameter completely characterize the inequality. 
Secondly, the same technics are used to study the 
Caffarelli-Kohn-Nirenberg
inequality in the Riemannian case. 
\end{abstract}
%

\maketitle

\section{Introduction} \label{IN}

In this paper, we first consider the general form of
Caffarelli-Kohn-Nirenberg inequality  in the Euclidean setting, that is to say,
we study the inequality 
\begin{equation}
  \label{CKN} \Big(\int_{\R^n} \|x\|^{\gamma r}|u|^{r} dx
  \Big)^{1/r}
   \leq C \Big(\int_{\R^n} \|x\|^{\alpha
   p}\left\|\nabla u\right\|^{p} dx \Big)^{a/p}
   \Big(\int_{\R^n} \|x\|^{\beta q}|u|^{q} dx
   \Big)^{(1-a)/q},
\end{equation}
where the real parameters $p$, $q$, $r$, $\alpha$, $\beta$, $\gamma$, satisfy
\begin{equation}
\label{P}
  p,~q \geq 1, \quad r>0 \quad \text{and}
  \quad \gamma r, \; \alpha p, \; \beta q > -n.
\end{equation}
Moreover, a parameter $\sigma$ is introduced by the following convex combination
\begin{equation}
\label{g}
  \forall a \in [\,0,1], \quad \gamma=a \, \sigma + (1-a) \, \beta.
\end{equation}

From a dimensional balance of \eqref{CKN}, we obtain 
\begin{equation}
\label{C} \frac{1}{r}+\frac{\gamma}{n}=a
\left(\frac{1}{p}+\frac{\alpha-1}{n}\right)+(1-a)\left(\frac{1}{q}+\frac{\beta}{n}\right).
\end{equation}
Also, if $a>0$, then $\sigma\leq\alpha$ and, if $a>0$ and
$$\frac{1}{p}+\frac{\alpha-1}{n}=\frac{1}{r}+\frac{\gamma}{n},$$
then $\sigma\geq\alpha-1$. These are necessary and 
sufficient conditions for \eqref{CKN},
as it was proved in \cite{CKN1}.
Here, we be focused on the sufficiency. 
Furthermore, for any compact set in the 
parameter space, such that, \eqref{P}, \eqref{C} and $(\alpha-1) \leq \sigma \leq \alpha$, 
the positive constant $C$ in \eqref{CKN} is bounded. 

\medskip
In this paper the study of Caffarelli-Kohn-Nirenberg inequality relies in a suitable introduced parameter
$s$ as defined in \eqref{S} below, where $0 < s \leq p^*$, and $p^*$ is the Sobolev conjugate of $p$, see \eqref{SOBCONJ}. In fact, 
the proof of  \eqref{CKN} follows by simple interpolation arguments and H\"{o}lder's inequality,
once the new parameter $s$ is considered. Morevoer, we completely characterize the 
inequality from the ranges of $s$. In particular, for $s \in [p,p^*]$ the Caffarelli-Kohn-Nirenberg inequality is proved to be the
interpolation between Hardy's inequality \eqref{HARDYINEQ} and the weighted Sobolev inequality \eqref{WSI}. In this case, the constant  $C>0$, 
which appears in \eqref{CKN} is finite, and it is an exponential convex combination of
the Hardy and the Sobolev constant inequalities.
On the other hand, for $0 < s < p$ the inequality \eqref{CKN} is no more an
interpolation between Hardy and (weighed) Sobolev, further the constant $C$ is not necessarily bounded.   
The characterization of \eqref{CKN} from the ranges of $s$ is new, and we hope clarify the  
understanding of it. 

\medskip
Before we continue to discuss the inequality \eqref{CKN} in the
Euclidean setting, which contains most of the well known
inequalities, we present the Caffarelli-Kohn-Nirenberg inequality
in the Riemannian case, which is also one of the issues of this
paper and a new result, in this generality, from authors'  knowledge. 
Then, we study in Section \ref{REMANNIAN} the following
inequality
\begin{equation}
  \label{CKNREMANNIAN} \Big(\int_{U} \left\|h\right\|^{\gamma r}|u|^{r}dV
  \Big)^{1/r}
   \leq C \Big(\int_{U} \left\|h\right\|^{\alpha
   p}\left\|\nabla u\right\|^{p}dV \Big)^{a/p}
   \Big(\int_{U} \left\|h\right\|^{\beta q}|u|^{q}dV
   \Big)^{(1-a)/q}
\end{equation}
where $U \subset M$ is any open precompact region,
$M$ is a Riemannian n-manifold, with
$n\geq3$,
and $h$ is a special vector field which allows us,
to apply the same technics used before for the Euclidean setting.
The conditions on $M$, which is to say, a complete and non-compact Riemannian
manifold, also with maximal volume growth and non-negative Ricci curvature, will be explained with 
details in Section \ref{REMANNIAN}. Moreover, the special vector field $h$. Those conditions are most 
related to weight's homogeneity, and extra terms on the right hand side of Hardy-Sobolev type 
inequalities on manifolds.  

\medskip
The Caffarelli-Kohn-Nirenberg inequality appeared for the first
time in $\cite{CKN2}$, in that case $p=q=2$ and $a=1$. The paper
$\cite{CKN2}$ introduces the convenient definition of a suitable
weak solution for the incompressible 3D Navier-Stokes equations
with unit viscosity, and the Caffarelli-Kohn-Nirenberg inequality
was used to improve the result established before by Scheffer
concerning the dimension of the subset of singularities. Albeit
\eqref{CKN} appears earlier in the study of incompressible
Navier-Stokes equations, it was soon understood that, this
inequality is important in the theory of elliptic equations, for
instance of the following type
\begin{equation}
\label{e}
  -\dive\big( A(x)\nabla u \big)= f(x,u),
\end{equation}
where $A$ is a nonnegative function that may be unbounded and $f$
is a given function.

\medskip
In different works, the existence and multiplicity of positive or
nodal solutions for \eqref{e} was established, provided the
differential operator
$
  \dive\big(A(x) \nabla (\cdot)\big)
$
is uniformly elliptic (for more details, see \cite{Am} and
\cite{RAB}). Although, interesting and important situations are
obtained in the degenerated and singular cases, respectively
$ \inf A(x)= 0$, $\sup A(x)= \infty$.
For instance, it was studied in $\cite{GHERGU}$ the existence (of
at least two solutions) for the following problem
$$
  -\dive \big( |x|^{-2 s} \, \nabla u \big)
  = K(x)\, |x|^{- \sigma p} \, |u|^{p-2}u + \lambda \; g(x)
  \quad \text{in $\R^{n} \setminus \{0\}$},
$$
where $K\in L^{\infty}(\R^{n})$ (in fact, $K$ has more
conditions), $\lambda$ is a parameter, and $g$ is a continuous
function. The inequality \eqref{CKN} was used to show that the
functional
$$
  J_{\lambda}(u)= \frac{1}{2} \int_{\R^{n}} |x|^{-2 s}
  \|\nabla u \|^{2} \, dx
  - \frac{1}{p} \int_{\R^{n}} K(x) \, |x|^{- \sigma p}|u|^{p} \, dx
   -\lambda\int_{\R^{n}}g(x)u \, dx
$$
is well defined among other properties, that is to say, the
existence of (at least) two critical points for $J_\lambda$.
Similarly, it was studied in \cite{BOU} the existence of
nontrivial solutions for the following problem
$$
  -\dive \big(|x|^{-2 s} \, \nabla u)=  \mu \, |u|^{-2(s+1)} \, u
  + K(x)\, |x|^{2^* \sigma} \, |u|^{2^*-2} u+ \lambda \,  g(x)
  \quad \text{in $\R^{n} \setminus \{0\}$},
$$
where $\mu$ is also a parameter. Again the
Caffarelli-Kohn-Nirenberg inequality \eqref{CKN} was used to show
that the functional
$$
  I_{\lambda,\mu}(u)=\frac{1}{2}\left\|u\right\|^{2}_{\mu}
  -\frac{1}{2^{*}}\int_{\R^{n}} \; K(x)|x|^{-2^{*} \sigma}|u|^{2^{*}}
  -\lambda\int_{\R^{n}}g(x)u
$$
is well defined and the existence of critical points, where
$$
  \|u\|^{2}_{\mu}= \frac{1}{2} \int_{\R^{n}} \Big( |x|^{-2 s}
  \|\nabla u \|^{2} - \frac{\mu}{s} |u|^{-2s} \Big) \, dx.
$$
Therefore, the importance of Caffarelli-Kohn-Nirenberg inequality
\eqref{CKN} is also shown in the two elliptic problems mentioned
before. More information related to applications of this
inequality in elliptic problems can be found in \cite{CIR},
\cite{FKC} and \cite{Pa}. Finally, we highlight that these
singular and degenerate elliptic equations are given models
(at the equilibrium) for anisotropic media, that are possibly
somewhere between perfect insulators or perfect conductors, see
\cite{LIONS}.

\medskip
Now, let us consider some
particular values of the parameters:

\smallskip
1. ({\bf Sobolev inequality}) 
When $a= 1$, we have by \eqref{g} $\gamma= \sigma$.
Taking $\gamma= \alpha$, it follows by \eqref{C} that, $r= p^*$, and $p< n$, where
\begin{equation}
\label{SOBCONJ}
   p^*:= \frac{n p}{n-p}.
\end{equation}
Then, we obtain from \eqref{CKN} the weighted version of the Sobolev inequality
\begin{equation}
\label{WSI}
     (\int \|x\|^{\alpha p^*} |u(x)|^{p^{*}}dx)^{{1}/{p^{*}}} \leq 
     C_S \; (\int \|x\|^{\alpha p} \left\|\nabla u(x)\right\|^{p}dx)^{{1}/{p}}.
\end{equation}
In particular, for $\alpha= 0$ the weights disappear, 
and we get the usual Sobolev's inequality 
$$(\int|u(x)|^{p^{*}}dx)^{{1}/{p^{*}}} \leq  C_S  \; (\int\left\|\nabla u(x)\right\|^{p}dx)^{{1}/{p}}.$$
Since it was found by Sobolev, many
studies were made to better understand this inequality in
different directions (sharp version, remainder terms, bounded
domains, Riemannian manifolds, etc). More information about this
inequality can be found for example in $\cite{SALOFF-COSTE}$.

\medskip
2.  ({\bf Hardy inequality})  
Again, we take $a= 1$ (thus $\gamma= \sigma$) and consider $\gamma= \alpha - 1$, hence $r= p$. 
Therefore, we obtain from \eqref{CKN} the following version of Hardy's inequality
\begin{equation}
\label{HARDYINEQ}
  (  \int \|x\|^{(\alpha-1) p}  |u(x)|^{p} dx )^{1/p}  \leq C_H \; (\int  \|x\|^{\alpha p} \left\|\nabla u(x)\right\|^{p} dx)^{1/p}.
\end{equation}
In particular, for $\alpha= 0$ we have
$$
    \int \frac{|u(x)|^{p}}{\left\|x\right\|^{p}} dx \leq (C_H)^p \; \int\left\|\nabla u(x)\right\|^{p} dx.
$$
Information about the history of
Hardy inequality can be found in $\cite{KMP}$. This
inequality has also been studied in many different directions (remainder
terms, bounded and unbounded domains, singularity on the boundary,
etc). One interesting application can be found in
$\cite{CHIPOT}$, where this inequality is used to show the
existence of solutions for the Dirichlet problem for the p-Laplace operator in
bounded domains.

\medskip
3. ({\bf Gagliardo-Nirenberg inequality})
When $\alpha=\beta=\sigma=0$ we get $\gamma=0$, and it is
possible to recover from \eqref{CKN} the {Gagliardo-Nirenberg inequality}
$$(\int|u(x)|^{r}dx)^{\frac{1}{r}}\leq C \, (\int\left\|\nabla u(x)\right\|^{p})^{\frac{a}{p}}(\int|u(x)|^{q})^{\frac{1-a}{q}}.$$
In particular, if we consider $a=2/(2+\frac{4}{n})$, $p=2$ and
$q=1$, it follows the important 
{\bf Nash inequality}
$$(\int|u(x)|^{2}dx)^{\frac{1}{2}}\leq C \, (\int\left\|\nabla u(x)\right\|^{2})^{\frac{1}{2+\frac{4}{n}}}(\int|u(x)|)^{\frac{\frac{4}{n}}{2+\frac{4}{n}}}.$$

\medskip
The outline of the paper is the following. First we define a new
parameter that permits us to \textsl{uncouple} the parameter
$\gamma$ and to write the interpolation inequality in an
appropriate way. Then, we use this new parameter to obtain a more
simple inequality that is equivalent to ($\ref{CKN}$) in an
appropriate sense. Next, we give the proof of this inequality in
the Euclidean and the Riemannian case. Finally, we place in the
Appendix some information about the weighted versions of Hardy and Sobolev
inequalities.

\section{A simplified inequality}

In this section, we define the new parameter that, as announced
before at the introduction, allow us to write the original
inequality in a more convenient way.  
From now on, all the
integrals are on $\R^{n}$, and the functions that appear in the
inequalities are test functions, i.e., functions in the space
$C_{c}^{\infty}(\R^{n})$.
\medskip
\newline
First, we rewrite the condition $\eqref{C}$ in the following way
\begin{equation*}
\label{s}
\frac{1}{r}=a\left[\frac{1}{p}-\frac{(\sigma-(\alpha-1))}{n}\right]+\frac{1-a}{q},
\end{equation*}
and define the key parameter $s$ as
\begin{equation}
\label{S} s:=\frac{np}{n-p(\sigma-(\alpha-1))}.
\end{equation}
One observes that $s$ has some useful properties:
\medskip
\newline
a. The parameter $s$ is \textsl{always positive}. Indeed, from \eqref{P} we know that the parameter $r$ is positive, hence if $s$ is supposed negative, from \eqref{s} we would have some values of $a$ for which $r$ would be negative (more precisely, it happens for values of $a$ nearby to $1$). Moreover, if $s$ is zero, then for $a=1$, we would have that $r$ would be infinite, but $r$ is a real number. In both cases, we have a contradiction with the condition that $r$ is a positive real number.
\medskip
\newline
b. As a consequence of item a., we have the following inequality from \eqref{S}
$$n-p(\sigma-(\alpha-1))>0$$
or 
\begin{equation}
\label{np}
n>p(\sigma-(\alpha-1)).
\end{equation}
Therefore, when $\sigma=\alpha$, we have $n>p$. So, it means that this relation between $n$ and $p$ is \textit{implicit} in the inequality 
when $\sigma=\alpha$, and it is important, because (weighted) Sobolev inequality is used exactly in the case $\sigma=\alpha$. 
Observe that, for $\sigma\in[\alpha-1,\alpha)$,
it does not follow necessarily $p<n$. For instance, with $\sigma=\alpha-(1/2)$ we have
$$n>p/2$$
and, if $p=n$, this inequality is still true. More precisely, if we write  
$\sigma= \alpha - \delta$
for any $\delta \in [0,1)$, we must have $p< n / (1-\delta)$. 
On the other hand, for $\sigma \leq (\alpha - 1)$ we do not have any relation between $n$ and 
$p$ that comes from \eqref{np}. However, there exist three relations between $n$ and $p$ (also $\alpha$) that comes from \eqref{P}. The first is
$$\alpha p>-n.$$
The other two relations come from the condition $\gamma r >-n$ in equation \eqref{P}, and equations \eqref{g}, \eqref{C}. They are
$$
\begin{aligned}
(\alpha-1)p&>-n \quad \text{and} \quad
\alpha p^{*}&>-n.
\end{aligned}
$$
These conditions, which are valid for all values of $\sigma$, allow us to use weighted versions of Hardy and Sobolev inequalities.

\medskip
c. The parameter $s$ is an increasing function of $\sigma$. Let $\sigma_{1},\sigma_{2}\in(-\infty,\alpha]$ be such that,
$\sigma_{1 }\leq \sigma_{2}$. Then, we have $\sigma_{1}-\alpha+1 \leq\sigma_{2}-\alpha+1$, and thus
$$
  \frac{np}{n-p(\sigma_{1}-\alpha+1)} \leq \frac{np}{n-p(\sigma_{2}-\alpha+1)},
$$  
that is, $s(\sigma_{1})\leq s(\sigma_{2})$. In particular, if $\sigma\leq \alpha$, then $s\leq p^{*}$, and since $s>0$ we obtain
$$0<s\leq p^{*}.$$ 

\bigskip
To follow, we use the parameter $s$ to define a new inequality. 
The relation between \eqref{CKN} and this new inequality is shown in the following

\begin{lemma}
\label{SIQ}
Assume conditions \eqref{P} and \eqref{g}. If there exist $C>0$ such that
\begin{equation}
\label{UI} \left(\int\left\|x\right\|^{\sigma
s}\left|u(x)\right|^{s}dx\right)^{1/s}\leq
C\left(\int\left\|x\right\|^{\alpha p}\left\|\nabla
u(x)\right\|^{p}dx\right)^{1/p},
\end{equation}
then the inequality $\eqref{CKN}$ holds.
\end{lemma}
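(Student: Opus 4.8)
The plan is to derive \eqref{CKN} from \eqref{UI} by writing the target left-hand side as a product of two factors controlled respectively by \eqref{UI} (after raising to the power $a$) and by the elementary weighted $L^q$-quantity on the right (raised to the power $1-a$). Concretely, I would start from the convex-combination structure: since $\gamma = a\sigma + (1-a)\beta$ by \eqref{g}, the weight splits as $\|x\|^{\gamma r} = \|x\|^{a\sigma r}\,\|x\|^{(1-a)\beta r}$, so that
\begin{equation*}
\int \|x\|^{\gamma r}|u|^r\,dx = \int \bigl(\|x\|^{\sigma}|u|\bigr)^{ar}\,\bigl(\|x\|^{\beta}|u|\bigr)^{(1-a)r}\,dx.
\end{equation*}
The first step is then to apply H\"older's inequality to this integrand with a pair of conjugate exponents chosen so that the first factor lands in $L^s$ (with the power $s$ it needs, i.e.\ $ar\cdot(\text{exponent}) = s$) and the second factor lands in $L^q$. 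This forces the first H\"older exponent to be $s/(ar)$ and the second to be $q/((1-a)r)$; the key computation is to check these are genuinely conjugate, i.e.\ that $\tfrac{ar}{s} + \tfrac{(1-a)r}{q} = 1$, which is precisely the rewritten dimensional relation \eqref{s} together with the definition \eqref{S} of $s$. (One also needs $s/(ar)\ge 1$ and $q/((1-a)r)\ge 1$, which again follow from \eqref{s} since each bracketed term there is nonnegative — this is where the earlier observation $0<s\le p^*$ and the sign conditions enter.)

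Having applied H\"older, one obtains
\begin{equation*}
\int \|x\|^{\gamma r}|u|^r\,dx \;\le\; \Bigl(\int \|x\|^{\sigma s}|u|^s\,dx\Bigr)^{ar/s}\Bigl(\int \|x\|^{\beta q}|u|^q\,dx\Bigr)^{(1-a)r/q},
\end{equation*}
and taking both sides to the power $1/r$ gives exactly the shape of \eqref{CKN} but with the first factor being the $L^s$-weighted norm of $u$ raised to the power $a$. The second step is simply to invoke the hypothesis \eqref{UI}, raised to the power $a$, to bound that $L^s$-factor by $C^a\bigl(\int\|x\|^{\alpha p}\|\nabla u\|^p\bigr)^{a/p}$. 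Combining the two steps yields \eqref{CKN} with constant $C^a$ (so in particular the constant in \eqref{CKN} is controlled by that in \eqref{UI}).

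The main obstacle — really the only nontrivial point — is the bookkeeping needed to confirm that the exponents produced by the splitting are admissible H\"older exponents: that they are conjugate and each is at least $1$. I expect this to reduce cleanly to \eqref{s} and \eqref{S}, but one must handle the degenerate cases $a=0$ and $a=1$ separately (when $a=1$ the statement is immediate from \eqref{UI} since then $\gamma=\sigma$ and $r=s$ by \eqref{s}, and when $a=0$ the inequality \eqref{CKN} is trivial, reducing to $\gamma=\beta$, $r=q$). A secondary routine check is that all the integrals appearing are finite for $u\in C_c^\infty(\R^n)$, which is guaranteed by the weight conditions in \eqref{P} together with the derived conditions $(\alpha-1)p>-n$ and $\alpha p^*>-n$ recorded in item b.
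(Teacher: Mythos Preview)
Your proposal is correct and is essentially the same argument as the paper's: the paper introduces an auxiliary parameter $b:=\dfrac{aq}{aq+(1-a)s}$ and applies H\"older with exponents $1/b$ and $1/(1-b)$, but since $b=ar/s$ and $1-b=(1-a)r/q$ these are precisely your exponents $s/(ar)$ and $q/((1-a)r)$, and the paper's derivation of $\gamma r=(1-b)\beta q+b\sigma s$ is just your splitting $\gamma r=a\sigma r+(1-a)\beta r$ rewritten. The handling of the degenerate case $a=0$ and the subsequent application of \eqref{UI} match as well.
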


\begin{proof}
First, we rewrite the dimensional balance condition
\eqref{C} in the following convenient way
\begin{equation}
   \label{r2}
     r= (1-b) \, q + b \, s,
\end{equation}
where $b \in [0,1]$ is defined as
\begin{equation}
\label{b} b:=\frac{a q}{a q+(1-a)s}.
\end{equation}
Therefore, for any fixed $q$ the parameter $r$ depends on the
values of $s$ and $a$. Now, using \eqref{C} it is possible to get
a convenient inequality, see \eqref{IEQ}, where the importance of
the parameter $s$ becomes more clear. Indeed, we observe first
that for $a=0$ the inequality \eqref{CKN} turns a equality with
$C=1$. Then, we hereupon assume $a>0$ and consequently $b>0$.
Hence from the definition of $\gamma$ and \eqref{C}, we obtain
$$
  \gamma \, r= (1-b)(1-a)\beta q+(1-b)a\sigma q+(1-a)bs\beta+ba\sigma s,
$$
but, we have from \eqref{r2}, \eqref{b} the following
relation between $b$, $a$, $q$ and $s$
\begin{equation}
\label{ri} a(1-b)q =b(1-a)s.
\end{equation}
Consequently, replacing \eqref{ri} in the former equality, it
follows that
$$
  \begin{aligned}
  \gamma \, r&= (1-b)\beta q-(1-b)a\beta q+b(1-a)\sigma s + (1-b) a \beta q + a b \sigma s
\\
  &=(1-b) \beta q - a (1-b) \beta q + b \sigma s - a b \sigma s + a (1-b) \beta q+ a b
  \sigma s
\\
  &=(1-b)\beta q + b \sigma s.
\end{aligned}
$$
Then, we write
$$
  \begin{aligned}
  \Big(\int \left\|x\right\|^{\gamma r}|u|^{r}dx\Big)^{1/r}
&=\Big(\int\left\|x\right\|^{(1-b)\beta q+b \sigma s} \;
\left|u(x)\right|^{(1-b)q+bs}dx \Big)^{1/r}
\\
  &\leq \Big(\int\left\|x\right\|^{\beta
  q} \left|u(x)\right|^{q}dx \Big)^{(1-b)/r}
  \Big(\int\left\|x\right\|^{\sigma s}\left|u(x)\right|^{s}dx \Big)^{b/r},
\end{aligned}
$$
where we have applied Hölder's inequality for $1/(1-b)$ and $1/b$.
Further, using \eqref{r2} and the definition of $b$, we have
\begin{equation}
\label{IEQ}
\begin{aligned}
  \Big(\int \left\|x\right\|^{\gamma r}\left|u(x)\right|^{r}dx \Big)^{1/r}
  \leq
  &\Big(\int\left\|x\right\|^{\beta q}\left|u(x)\right|^{q}dx\Big)^{(1-a)/q}
  \\
  & \times \Big(\int\left\|x\right\|^{\sigma s}\left|u(x)\right|^{s}dx \Big)^{a/s}.
\end{aligned}
\end{equation}
One observes that, the last inequality holds for all admissible
values of the parameters. Moreover, in order to prove \eqref{CKN}
it is enough to show the simpler inequality \eqref{UI}.
\end{proof}

\begin{remark}
From equations \eqref{UI} and \eqref{IEQ} the role of the new parameter
$s$ is clear: We pass from the analysis of the parameter $\gamma$,
that is, in a bidimensional parameter space, to the
analysis of the parameter $s$, that is, in a one
dimensional parameter space. 
\end{remark}

At the end of this section, we state the principal theorem of this paper. 
First, we establish a useful relation between $s,\sigma$ and $n$. Let us recall the condition between $\gamma, r$ and $n$, that is
\begin{equation}
\label{g2}
\gamma r > -n.
\end{equation}
As defined in \eqref{g}, we have that
$$\gamma=a\sigma+(1-a)\beta.$$
On the other hand, using the relation \eqref{C} we obtain
$$r=\frac{sq}{aq+(1-a)s}.$$
Using these equalities in \eqref{g2}, we have for all $a \in [0,1]$,
$$
    a[sq\sigma+nq]+(1-a)[ns+\beta sq]>0.
$$
In particular, for $a=1$, we have
\begin{equation}
\label{ss}
s\sigma>-n.
\end{equation}

\begin{theorem} \label{MAINTHM}
Let $p \geq 1$, $\alpha$, and $\sigma$ be such that $\alpha \, p > -n$,
$\sigma \leq \alpha$. Consider 
$s$ as defined in \eqref{S} satisfying \eqref{ss}. Then, there exists $C>0$, such that \eqref{UI}
holds, that is
$$
\left(\int\left\|x\right\|^{\sigma
s}\left|u(x)\right|^{s}dx\right)^{1/s}\leq
C\left(\int\left\|x\right\|^{\alpha p}\left\|\nabla
u(x)\right\|^{p}dx\right)^{1/p}.
$$
Moreover, when $s \in [p, p^*]$, the constant $C$ is bounded.
\end{theorem}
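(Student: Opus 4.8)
The plan is to take the endpoint cases $s=p$ and $s=p^{*}$ as building blocks and to argue separately for $s\in[p,p^{*}]$ and for $s\in(0,p)$; by the properties of $s$ established above ($s>0$ always, and $\sigma\le\alpha\iff s\le p^{*}$) these two regimes cover all admissible $s$. At the endpoints there is nothing to do: when $\sigma=\alpha$ one has $s=p^{*}$ (which presupposes $p<n$), and \eqref{UI} is exactly the weighted Sobolev inequality \eqref{WSI} with constant $C_{S}$; when $\sigma=\alpha-1$ one has $s=p$, and \eqref{UI} is the weighted Hardy inequality \eqref{HARDYINEQ} with constant $C_{H}$. Note also that the standing hypotheses $\alpha p>-n$ together with \eqref{ss} force $(\alpha-1)p>-n$ and $\alpha p^{*}>-n$, which are precisely the admissibility requirements for \eqref{HARDYINEQ} and \eqref{WSI} (see the Appendix).

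For $s\in[p,p^{*}]$, equivalently $\sigma\in[\alpha-1,\alpha]$, I would interpolate \eqref{HARDYINEQ} and \eqref{WSI} by applying H\"older's inequality to the left-hand integral of \eqref{UI}. One seeks $\lambda,\mu\ge0$ with $\lambda+\mu=1$ such that
$$\|x\|^{\sigma s}|u|^{s}=\big(\|x\|^{(\alpha-1)p}|u|^{p}\big)^{\lambda}\,\big(\|x\|^{\alpha p^{*}}|u|^{p^{*}}\big)^{\mu},$$
that is, $p\lambda+p^{*}\mu=s$ and $(\alpha-1)p\lambda+\alpha p^{*}\mu=\sigma s$. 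The definition \eqref{S} of $s$ is exactly what makes this (a priori overdetermined) linear system consistent, and it gives $\lambda=(\alpha-\sigma)s/p\in[0,1]$. H\"older with exponents $1/\lambda$ and $1/\mu$, followed by \eqref{HARDYINEQ} and \eqref{WSI} on the two resulting integrals, then yields
$$\Big(\int\|x\|^{\sigma s}|u|^{s}\,dx\Big)^{1/s}\le C_{H}^{\,p\lambda/s}\,C_{S}^{\,p^{*}\mu/s}\,\Big(\int\|x\|^{\alpha p}\|\nabla u\|^{p}\,dx\Big)^{1/p},$$
and since $p\lambda/s+p^{*}\mu/s=1$, the constant $C=C_{H}^{\,p\lambda/s}C_{S}^{\,p^{*}\mu/s}$ is an exponential convex combination of $C_{H}$ and $C_{S}$. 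As $C_{H}$ and $C_{S}$ remain bounded on compact subsets of the admissible parameter set (Appendix), so does $C$; this proves the ``moreover'' assertion.

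For $0<s<p$, that is $\sigma<\alpha-1$, the interpolation above is unavailable, and the idea is to use the divergence structure of the weight. Since \eqref{ss} gives $n+\sigma s>0$, we may write $\|x\|^{\sigma s}=\frac{1}{n+\sigma s}\operatorname{div}\!\big(x\,\|x\|^{\sigma s}\big)$; integrating by parts (the boundary integral over $\partial B_{\varepsilon}$ is $O(\varepsilon^{\,n+\sigma s})\to0$), using $\nabla(|u|^{s})=s|u|^{s-1}\operatorname{sgn}(u)\,\nabla u$ and $|x\cdot\nabla u|\le\|x\|\,\|\nabla u\|$, one obtains
$$\int\|x\|^{\sigma s}|u|^{s}\,dx\le\frac{s}{n+\sigma s}\int\|x\|^{\sigma s+1}\,|u|^{s-1}\,\|\nabla u\|\,dx,$$
after which one factors off $\|x\|^{\alpha}\|\nabla u\|$, applies H\"older, and estimates the remaining factor. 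This last estimate is where I expect the main difficulty: since \eqref{UI} is invariant under the dilations $u\mapsto u(\lambda\cdot)$, any purely pointwise ``peeling'' of the weight $\|x\|^{\sigma}$ from the left-hand side leaves an \emph{exactly} critical residual integral $\sim\int_{\spt u}\|x\|^{-n}\,dx$, which diverges logarithmically at the origin; so one must exploit the cancellation in the identity above and arrange the H\"older exponents so that the residual weight integral is carried over a region on which it converges, and the constant obtained this way need not stay bounded over parameter compacts. A minor technical point is that for $s<1$ the left-hand side of \eqref{UI} is only a quasi-norm and the chain rule for $|u|^{s}$ near the zero set of $u$ should be justified by a routine approximation (replacing $|u|$ by $(u^{2}+\varepsilon^{2})^{1/2}$ and letting $\varepsilon\to0$).
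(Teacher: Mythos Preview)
Your treatment of the range $s\in[p,p^{*}]$ matches the paper's proof exactly: both write the integrand of the left side as a geometric mean of the Hardy and weighted Sobolev integrands and apply H\"older with exponents $1/\lambda$ and $1/\mu$; your $\lambda=(\alpha-\sigma)s/p$ is the paper's $1-c$, and the resulting constant $C_{H}^{p\lambda/s}C_{S}^{p^{*}\mu/s}$ is identical.

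For $0<s<p$ your proposal diverges from the paper and remains incomplete. The paper does \emph{not} integrate by parts; it simply sets $\kappa:=\tfrac{1}{s}-\tfrac{1}{p}>0$, notes from \eqref{S} that $\sigma=(\alpha-1)-\kappa n$, and splits the weight pointwise as $\|x\|^{\sigma s}=\|x\|^{(\alpha-1)s}\cdot\|x\|^{-\kappa n s}$. H\"older with exponents $p/s$ and $p/(p-s)$ then gives
\[
\int\|x\|^{\sigma s}|u|^{s}\,dx\le\Big(\int\|x\|^{(\alpha-1)p}|u|^{p}\,dx\Big)^{s/p}\Big(\int_{\spt u}\|x\|^{-n}\,dx\Big)^{(p-s)/p},
\]
after which one applies Hardy to the first factor and evaluates the second in polar coordinates as $\big(\ln(R/r)\big)^{(p-s)/p}$, with $R=\sup_{\spt u}\|x\|$, $r=\inf_{\spt u}\|x\|$. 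The constant obtained is $C=C_{H}\,(\ln(R/r))^{\kappa}$, which depends on the support of $u$; this is precisely why the theorem only asserts boundedness of $C$ in the range $s\in[p,p^{*}]$. Your scaling observation about the critical residual $\int\|x\|^{-n}$ is exactly right, but the resolution is not to ``exploit cancellation'' via the divergence identity: it is simply to accept a support-dependent constant in this regime, consistent with the ``moreover'' clause. Your integration-by-parts route would, after H\"older, land on an integral of the form $\int\|x\|^{(\sigma s+1-\alpha)p'}|u|^{(s-1)p'}$ whose exponents do not close back onto \eqref{UI} or \eqref{HARDYINEQ} without a further splitting that reproduces the same critical factor, so it does not circumvent the issue you identified.
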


\subsection{Proof of Theorem \ref{MAINTHM}. The case $s \in [p,p^*]$}

The strategy to show the inequality \eqref{UI}, it will be to
interpolate the end-point values of $s$. As observed from the
definition of $s$, this parameter only depends of the value of
$\sigma$, that is, for each value of $\sigma$ we obtain different
values of $s$. In the following table, we summarize the values of
$\sigma$ and the corresponding values of $s$, that we consider
here.

\begin{center}
\begin{tabular}{||l||c|c|r}
\hline
\bfseries $\sigma$ \normalfont & $\alpha-1$ & $\alpha$ \\
\hline
\bfseries $s$ \normalfont & $p$ & $p^{*}$ \\
\hline
\end{tabular}
\end{center}
The proof of \eqref{UI} when $s=p$ and $s=p^{*}$ can be found in
the Appendix. Then, for $s \in (p,p^*)$, first we write $\sigma$
conveniently as
$$
  \sigma=(1-\theta)(\alpha-1)+\theta\alpha,
$$
where $\theta\in[0,1]$ and $s=(1-c)p+cp^{*}$, where $c \in [0,1]$
is given by
$$
  c= \frac{\theta(n-p)}{n-\theta p}.
$$
It follows that
\begin{equation}
\label{s2}
   \sigma s= (1-c)\, (\alpha-1) \, p + \alpha \, c \, p^{*},
\end{equation}
since we have the following relation
$$\theta(1-c)p=(1-\theta)cp^{*}.$$
Now using ($\ref{s2}$), we obtain
$$
  \begin{aligned}
  \int \left\|x\right\|^{\sigma s} \left|u(x)\right|^{s} \, dx&=
  \int \left\|x\right\|^{(1-c)(\alpha-1)p+\alpha
cp^{*}}\left|u(x)\right|^{(1-c)p+cp^{*}}dx
\\[5pt]
  &= \int \Big(\left\|x\right\|^{(\alpha-1)p}|u(x)|^{p} \Big)^{(1-c)}
     \Big(\left\|x\right\|^{\alpha p^{*}}|u(x)|^{p^{*}} \Big)^{c} \, dx
\\[5pt]
  & \leq\Big(\int\left\|x\right\|^{(\alpha-1)p}|u(x)|^{p}dx\Big)^{1-c}
  \Big(\int\left\|x\right\|^{\alpha p^{*}}|u(x)|^{p^{*}}dx\Big)^{c},
  \end{aligned}
$$
where we have applied Hölder's inequality. Therefore, for each $s
\in [p,p^*]$ fixed
$$
  \Big(\int\left\|x\right\|^{\sigma s}\left|u(x)\right|^{s} \, dx \Big)^{1/s}
  \leq C \Big(\int\left\|x\right\|^{\alpha p}\left\|\nabla
  u(x)\right\|^{p} \, dx \Big)^{1/p},
$$
where the constant $C$ is given by
$$
    C= C_{H}^{(1-c) p / s} \; C_{S}^{c p^{*} / s}.
$$

\subsection{Proof of Theorem  \ref{MAINTHM}. The case $s \in (0,p)$}

Let $0<s<p$ be fixed and define
$$
  \kappa:= \frac{1}{s}-\frac{1}{p},
$$
which is a positive number. Then from the definition of $s$, we
could write
$$
  \sigma =(\alpha-1)- \kappa \, n,
$$
and we have
$$
  \begin{aligned}
   \int \left\|x\right\|^{\sigma s}\left|u(x)\right|^{s} \, dx
   &= \int\left\|x\right\|^{(\alpha-1)s}|u(x)|^{s}\left\|x\right\|^{- \kappa \, n s} \, dx
\\[5pt]
  &\leq \Big( \int  \left\|x\right\|^{(\alpha-1)p}
  |u(x)|^{p} \, dx \Big)^{s/p}
  \; \Big(\int \|x\|^{-n} \, dx \Big)^{(p-s)/p},
  \end{aligned}
$$
where we have used the Hölder inequality with $1/\tilde{p} +
1/\tilde{q}= 1$, for $\tilde{p}= p/s > 1$. 
Denoting $U= \spt(u )$, $R= \sup_{x \in U} \|x\|$, $r= \inf_{x \in U} \|x\|$, we get the following
$$
  \begin{aligned}
  \Big(\int\left\|x\right\|^{\sigma s}\left|u(x)\right|^{s} \, dx \Big)^{1/s}
  &\leq  \Big(\int\left\|x\right\|^{(\alpha-1)p}|u(x)|^{p} \, dx \Big)^{1/p}
  \; \Big( \ln \frac{R}{r} \Big)^{(p-s)/s p} 
\\[5pt]
  &\leq C \, \Big(\int\left\|x\right\|^{\alpha p} \|\nabla u(x)\|^{p} \, dx
  \Big)^{1/p},
  \end{aligned}
$$
where $C= C_H \, \Big(\ln \frac{R}{r} \Big)^\kappa$.

\section{The Riemannian case} \label{REMANNIAN}

In this section, we study the Caffarelli-Kohn-Nirenberg 
inequality \eqref{CKNREMANNIAN}, that is, the general 
inequality in the Riemannian setting. 
The proof may follows the same ideas as before, 
hence we just state Lemma \ref{SIQR} and Theorem \ref{MAINTHMR},
which are adapted versions 
from the Euclidean case. In fact, we focus here 
to describe in details, the main
differences which occurs due the inequality \eqref{CKNREMANNIAN}
be posed on complete and non-compact Riemannian
manifolds.

\medskip
First of all, the inequality \eqref{CKN} in \cite{CKN1} was
defined using weight functions of $\|x\|^r$ type, for some $r \in
\R$. The homogeneity of this type of weight functions was one of
the main ingredients in the original proof of the inequality
\eqref{CKN}. Since then, many modifications on the weights, in
particular cases of the \eqref{CKN}, have been considered. For
instance, it was considered in \cite{BT} a cylindrical weight,
i.e., a weight function of the form
$$w(y)=||y||,$$
where $y$ is the projection of a point $x$ in $\R^{n}$ onto
$\R^{n-k}$ (that is $x=(x_{0},y)$, with $x_{0}\in\R^{k}$ and
$y\in\R^{n-k}$). Also, it was considered in \cite{HK} the
following type of weight
$$
  w(x)=\left\{
  \begin{aligned}
  \log\Big(\frac{1}{|x|} \Big)\quad &\text{if $n=1$},\\[5pt]
  \log \Big(\frac{R}{||x||} \Big) \quad &\text{if $n \geq 2$},
  \end{aligned}
  \right.
$$
where $R>1$. Moreover, it can be found in \cite{YZ} an approach of
the inequality \eqref{CKN} with general weights and also a
remainder term. In all these cases, the relationship between the
parameters do not follow the conditions of the original theorem,
because the weights are not necessary homogeneous.
Similar condition happens for weighted inequalities on 
manifolds, from obvious reason. In fact, it does not have a standard way to 
consider weighted inequalities on manifolds. 

\medskip
Concerning the Sobolev inequality in its weighted
version, the distance function is a commonly used weight function,
see \cite{MINERBE, WEI}, but it is not a consensus. In a different
and interesting direction, it was used in \cite{BOZHKOV} the
existence of a conformal Killing vector field $h$ (see the
definition below) on a complete $n$-manifold $M$, $n \geq 3$,  
to prove the following
inequality
$$
  \int_{M} \|h\|^{-p} \; |u|^{p} \, dV \leq
  \Big(\frac{|n-p|}{p}\Big)^{-p} \int_{M}||\nabla u||^{p} \, dV,
$$
where $p>1$, $u\in W^{1,p}(M)$ and $M$ admits a $C^{1}$ conformal
Killing vector field, such that
$ \divg h= n$.
In that paper, more general inequalities were proved, but the case
of the weighted Sobolev inequality was left open. On the other
hand, it was considered in \cite{MINERBE} both inequalities: a
weighted Sobolev and a weighted Hardy, with the distance function
being the weight function. It is given in that paper the proof of
both inequalities under the volume growth assumption, which is not
maximal, but the volume satisfies a doubling condition.

\medskip
Another important difference in the Riemmanian setting 
(not necessarily with weights) from  
the Euclidean case, is concerned an extra term which appears 
on the right hand side, more precisely, let us consider the 
Sobolev inequality.
It can be found in \cite{SALOFF-COSTE}, Theorem 3.3.10, the following version of
the Sobolev inequality: if $M$ is a complete $n$-manifold,
$U\subset M$ is any open precompact region, 
$u\in C_{c}^{\infty}(U)$, and $p\in[1,n)$, then there exists a constant
$C(U,p)$ such that
$$
  \begin{aligned}
  \Big(\int_{U}|u(x)|^{p^{*}}dV\Big)^{1/p^{*}} &\leq
  C(U,p)\Big[\Big(\int_{U}\left\|\nabla u(x)\right\|^{p}dV\Big)^{1/p}
  \\
  &\qquad \qquad
  +\Big(\int_{U}\left|u(x)\right|^{p}dV\Big)^{1/p}\Big].
\end{aligned}
$$
Therefore, we have an extra term which does not appear in the
Euclidean setting. Although, as mentioned in \cite{SALOFF-COSTE}, under
conditions about the volume growth and the Ricci curvature $(Ric  \geq 0)$, 
see Section 3.3.5, applying a pseudo-Poincar\'e type inequality, we obtain Theorem 3.3.11,
where there does not exist 
the second integral on the right hand side of the above
inequality.

\medskip
Finally, it is very important to observe that, as it was showed
in \cite{HEBEY}, it could happen a surprising phenomena in the
Riemannian setting: For any integer $n\geq2$, there exist a
smooth, complete $M$ Riemannian manifold, such that, for each $p
\in [1,n)$, $W^{1,p}(M)$ does not embed in $L^{p^{*}}(M)$, 
see \cite{HEBEY}, Chapter 3, Proposition 3.3. Therefore, 
$M$ be complete, is not a sufficient condition in oder to avoid this 
surprising phenomena.

\medskip
In this section, we consider the inequality \eqref{CKNREMANNIAN} for
functions in $W^{1,p}(M)$, where $M$ is a complete non-compact 
Riemannian $n$-manifold $(n \geq 3)$ with maximal volume growth, $Ric\geq0$, and the weight function is a
conformal Killing vector field. In fact, we assume that the
functions verifying \eqref{CKNREMANNIAN} are in
$C_{c}^{\infty}(U)$, where as before $U\subset M$ is any open precompact region.
The general case, that is $W^{1,p}(U)$ can be obtained by a
standard density argument.
One remarks that, the existence of conformal Killing vector
fields, in the case of a closed manifold (that is, a compact
manifold without boundary) implies that, the Ricci curvature is
non-negative, see \cite{YB}.

\medskip
Now, we recall the definition of a conformal Killing vector
field on a Riemannian manifold.
\begin{definition}
   Let $(M,g)$ be a complete $n$-dimensional Riemannian manifold $(n \geq 3)$, with $g$
   the Riemannian metric. Using local coordinates $(x^{i})_{i=1}^n$,
   we have that $g=(g_{ij})$. A \textbf{nontrivial conformal
Killing vector field} $h=h^{i}\frac{\partial}{\partial x^{i}}$,
   is a vector field on $M$, such that
      $$\nabla^{i}h^{j}+\nabla^{j}h^{i}=
      \frac{2}{n}(\divg h) \, g^{ij} =:\mu \, g^{ij}.$$
\end{definition}
We observe that, $\nabla^{i}(\cdot)$ is the covariant derivative
corresponding to the Levi-Civita connection, which is uniquely
determined by the metric $g$, ($g^{ij}$) is the inverse matrix of
($g_{ij}$), and $\divg h$ is the covariant divergence operator.

\medskip
Following \cite{SALOFF-COSTE}, we define the maximal volume
growth condition of geodesic balls on manifolds. For this, let $V(x,t)$
be the volume of a geodesic ball of radius $t>0$ around a point
$x$ on a manifold $M$. Then, we have the following

\begin{definition}
    Let $(M,g)$ be a complete Riemannian manifold. We say that
    $M$ has a \textbf{maximal volume growth}, if there exist a $c>0$ such that
        $$\forall r>0, \quad V(x,r)\geq cr^{n}.$$
\end{definition}
It is important to remark that, the above definition in
$\cite{SALOFF-COSTE}$ (see page 82), appears associated to the condition that
$Ric\geq0$, i.e., the Ricci curvature is non-negative, such that $M$ satisfies the 
pseudo-Riemannian inequality.

\bigskip
Then, we are in conditon to state the principal results of this section.
First, as in the Euclidean case, using the same parameter
$s$ as defined in \eqref{S}, we have the following 

\begin{lemma}
\label{SIQR}
Let $(M,g)$ be a complete non-compact Riemannnian
$n$-manifold, $n\geq3$, with maximal volume growth, $Ric\geq0$,
and let $U$ be any open precompact region in $M$. 
Assume conditions \eqref{P}, \eqref{g}, 
and $M$ admits a
conformal Killing vector field $h$, with $\divg h= n$. 
If there exist $C>0$ such that
\begin{equation}
\label{UI2R} \left(\int_{U}\left\|h\right\|^{\sigma
s}\left|u(x)\right|^{s}dV\right)^{1/s}\leq
C\left(\int_{U}\left\|h\right\|^{\alpha p}\left\|\nabla
u(x)\right\|^{p}dV\right)^{1/p},
\end{equation}
then the inequality \eqref{CKNREMANNIAN} holds.
\end{lemma}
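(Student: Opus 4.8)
The plan is to reproduce \emph{verbatim} the argument used for Lemma \ref{SIQ}, after observing that the passage from \eqref{CKNREMANNIAN} to \eqref{UI2R} is purely measure-theoretic: nothing about the Euclidean structure enters beyond the facts that $dx$ is a positive measure and $\|x\|$ a nonnegative measurable function, and here these are replaced by the measure $dV$ on $U$ and by the function $\|h\|$. First I would treat the trivial case $a=0$, for which \eqref{CKNREMANNIAN} reduces to an identity with $C=1$, and then assume $a>0$, so that the number $b$ of \eqref{b} is positive and the dimensional balance \eqref{C} can be rewritten as \eqref{r2}, together with the auxiliary identity \eqref{ri}.

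Next I would re-derive the algebraic identity $\gamma r=(1-b)\beta q+b\sigma s$ exactly as in the Euclidean proof; this uses only \eqref{g}, \eqref{C}, \eqref{b} and \eqref{ri} and is wholly independent of the geometry. With it in hand one writes, on $U$,
$$
\Big(\int_{U}\|h\|^{\gamma r}|u|^{r}\,dV\Big)^{1/r}
=\Big(\int_{U}\|h\|^{(1-b)\beta q+b\sigma s}\,|u|^{(1-b)q+bs}\,dV\Big)^{1/r},
$$
and applies Hölder's inequality with the conjugate exponents $1/(1-b)$ and $1/b$, legitimate for the measure $dV$ on $U$, to factor the right-hand side; rewriting the exponents through \eqref{r2} and \eqref{b} then gives the Riemannian analogue of \eqref{IEQ},
$$
\Big(\int_{U}\|h\|^{\gamma r}|u|^{r}\,dV\Big)^{1/r}
\leq\Big(\int_{U}\|h\|^{\beta q}|u|^{q}\,dV\Big)^{(1-a)/q}
\Big(\int_{U}\|h\|^{\sigma s}|u|^{s}\,dV\Big)^{a/s}.
$$
Raising the hypothesis \eqref{UI2R} to the power $a\geq 0$ and substituting the last factor then yields \eqref{CKNREMANNIAN}.

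I do not anticipate a genuine obstacle at the level of this Lemma; the work is essentially a transcription. The only points that need attention are: that \eqref{C} must be retained as a standing hypothesis, since in the Riemannian setting it is no longer forced by a scaling invariance and so has to be imposed for \eqref{CKNREMANNIAN} to be dimensionally consistent; that $b\in(0,1]$ and the Hölder exponents are admissible (interpreted in the obvious degenerate way when $b=1$, in which case $r=s$, $\gamma=\sigma$, and \eqref{CKNREMANNIAN} coincides with \eqref{UI2R}), which follows from \eqref{P} and \eqref{g}; and that the functions are taken in $C_{c}^{\infty}(U)$, the general $W^{1,p}(U)$ statement being recovered by density. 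All the genuinely Riemannian difficulties — proving \eqref{UI2R} itself through weighted Hardy and Sobolev inequalities on $M$, the use of $\divg h=n$, of maximal volume growth and $Ric\geq0$, and the handling of the extra lower-order term that distinguishes the Sobolev inequality on manifolds from its Euclidean counterpart — are deferred to the proof of Theorem \ref{MAINTHMR}, where \eqref{UI2R} is established.
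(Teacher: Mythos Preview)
Your proposal is correct and matches the paper's approach exactly: the paper explicitly states that Lemma~\ref{SIQR} is an adapted version of Lemma~\ref{SIQ} whose proof ``may follow the same ideas as before,'' and does not supply a separate argument. Your transcription of the proof of Lemma~\ref{SIQ} to the measure space $(U,dV)$ with weight $\|h\|$, together with your remark that \eqref{C} must be retained as a standing hypothesis, is precisely what is intended.
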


And hence we pass on the main

\begin{theorem}
\label{MAINTHMR} Under conditions of Lemma \ref{SIQR}, let $p \geq 1$, $\alpha$, and $\sigma$ be such that $\alpha \, p > -n$,
$\sigma \leq \alpha$. Consider 
$s$ as defined in \eqref{S} satisfying \eqref{ss}. Then, there exists $C>0$, such that \eqref{UI2R}
holds. Moreover, when $s \in [p, p^*]$, the constant $C$ is bounded.
\end{theorem}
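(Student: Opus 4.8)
The plan is to mirror the Euclidean argument of Theorem \ref{MAINTHM}, using the conformal Killing field $h$ in place of the position vector $x$, and invoking the Riemannian weighted Hardy and Sobolev inequalities recorded in the Appendix as the two endpoint cases $s=p$ and $s=p^*$. First I would reduce, via Lemma \ref{SIQR}, the proof of \eqref{CKNREMANNIAN} to the single inequality \eqref{UI2R}; then, exactly as in the Euclidean setting, I would split into the two regimes $s\in[p,p^*]$ and $s\in(0,p)$ determined by the range of $\sigma$ (recall $\sigma\le\alpha$ forces $0<s\le p^*$ by property c.\ of the parameter $s$). The endpoint $s=p^*$ corresponds to $\sigma=\alpha$ and is handled by the weighted Sobolev inequality on $M$; the endpoint $s=p$ corresponds to $\sigma=\alpha-1$ and is handled by the weighted Hardy inequality on $M$, which is where the hypothesis $\divg h=n$ and the conformal Killing identity $\nabla^i h^j+\nabla^j h^i=\tfrac{2}{n}(\divg h)g^{ij}$ enter: they give $\|h\|$ the homogeneity needed to run the standard integration-by-parts proof of Hardy's inequality (this is the content of the Appendix, which I would cite).

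For the interior range $s\in(p,p^*)$, I would write $\sigma=(1-\theta)(\alpha-1)+\theta\alpha$ with $\theta\in[0,1]$ and $s=(1-c)p+cp^*$ with $c=\theta(n-p)/(n-\theta p)$, so that $\sigma s=(1-c)(\alpha-1)p+\alpha c p^*$ by the relation $\theta(1-c)p=(1-\theta)cp^*$ — these are purely algebraic identities in the parameters and carry over verbatim from the Euclidean proof. Then
$$
\int_U \|h\|^{\sigma s}|u|^s\,dV=\int_U\big(\|h\|^{(\alpha-1)p}|u|^p\big)^{1-c}\big(\|h\|^{\alpha p^*}|u|^{p^*}\big)^{c}\,dV
\le\Big(\int_U\|h\|^{(\alpha-1)p}|u|^p\,dV\Big)^{1-c}\Big(\int_U\|h\|^{\alpha p^*}|u|^{p^*}\,dV\Big)^{c}
$$
by Hölder, and inserting the two endpoint inequalities yields \eqref{UI2R} with $C=C_H^{(1-c)p/s}C_S^{cp^*/s}$, bounded on compact parameter sets. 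For $s\in(0,p)$, I would set $\kappa=\tfrac1s-\tfrac1p>0$, note $\sigma=(\alpha-1)-\kappa n$, and use Hölder with exponent $p/s$ to bound $\int_U\|h\|^{\sigma s}|u|^s\,dV$ by $\big(\int_U\|h\|^{(\alpha-1)p}|u|^p\,dV\big)^{s/p}\big(\int_U\|h\|^{-n}\,dV\big)^{(p-s)/p}$; the weighted Hardy inequality then finishes, with the $\|h\|^{-n}$ integral playing the role of the $\ln(R/r)$ factor.

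The main obstacle — and the point that genuinely differs from the Euclidean case — is establishing the two endpoint inequalities on $M$ \emph{without} the extra lower-order term $\big(\int_U|u|^p\,dV\big)^{1/p}$ that appears in the general Saloff-Coste version of the Sobolev inequality. This is precisely why the hypotheses "complete, non-compact, maximal volume growth, $Ric\ge0$" are imposed: they allow the pseudo-Poincaré inequality to absorb the extra term (Saloff-Coste, Theorem 3.3.11), giving the clean weighted Sobolev inequality on $U$; and they likewise underwrite the weighted Hardy inequality, where in addition the conformal Killing structure of $h$ supplies the divergence identity $\divg(\|h\|^{a}h)=(a+n)\|h\|^{a}+\ldots$ that replaces the elementary computation $\dive(\|x\|^{a}x)=(a+n)\|x\|^{a}$ used in $\R^n$. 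Once those two inequalities are in hand (as asserted in the Appendix and the cited literature), the interpolation and Hölder steps are routine and identical to the Euclidean proof.
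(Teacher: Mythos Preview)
Your proposal is correct and follows essentially the same approach as the paper: the paper explicitly states that the proof of Theorem~\ref{MAINTHMR} ``follows the same ideas as before,'' meaning one transports the Euclidean argument of Theorem~\ref{MAINTHM} verbatim, replacing $\|x\|$ by $\|h\|$ and invoking the two Riemannian endpoint inequalities (weighted Hardy and weighted Sobolev) supplied in the Appendix. Your identification of where the geometric hypotheses enter --- the conformal Killing structure of $h$ with $\divg h=n$ for Hardy, and maximal volume growth with $Ric\ge 0$ to suppress the extra $L^p$ term in Sobolev --- matches the paper's discussion exactly.
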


\section{Appendix} \label{APP}

In this last section, we first state the Hardy type
inequality. The proof follows easily combining the ideas in
\cite{BOZHKOV} and \cite{SALOFF-COSTE}. It is important to note
that, the inequality in the Euclidean case can be
recovered with $h(x)=x$. 

\begin{theorem}
Let $(M,g)$ be a  complete non-compact  Riemannnian $n$-manifold with $n\geq3$,
and $U \subset M$ any open precompact region. If $M$ admits a
conformal Killing vector field, there exists a positive constant
C, such that the following inequality holds for all $u \in
C_{c}^{\infty}(U)$
\begin{equation}
\label{eq1}
\int_{U}\left\|h\right\|^{p(\alpha-1)}|u(x)|^{p}dV \leq C \int_{U}\|h\|^{\alpha p} \|\nabla u(x)\|^p \, dV.
\end{equation}
\end{theorem}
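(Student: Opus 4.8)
The plan is to reduce the weighted Hardy inequality \eqref{eq1} on $M$ to an integration-by-parts identity built from the conformal Killing vector field $h$, mimicking the classical Euclidean argument in which one tests $\dive(x/\|x\|^{\cdot})$ against $|u|^p$. First I would record the two structural facts about $h$ that make this work. Since $h$ is a nontrivial conformal Killing field with $\divg h = n$, the defining relation $\nabla^i h^j + \nabla^j h^i = \mu\, g^{ij}$ with $\mu = \tfrac{2}{n}\divg h = 2$ gives control on the covariant derivative of $h$; in particular $\langle \nabla_v h, v\rangle = \|v\|^2$ for every tangent vector $v$, which upon taking $v = h$ yields $\langle \nabla_h h, h\rangle = \|h\|^2$, i.e. $h$ acts like the radial field (for $h(x)=x$ in $\R^n$ this is exactly $x\cdot\nabla\|x\|^2/2 = \|x\|^2$). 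This is the substitute for homogeneity of the Euclidean weight and is what ties the exponent algebra together.

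Next I would compute the covariant divergence of the candidate vector field $X := \|h\|^{\alpha p - p}\, h$ (more precisely $\|h\|^{p(\alpha-1)} h$), using $\divg(f\,h) = f\,\divg h + \langle \nabla f, h\rangle$ together with $\nabla \|h\|^2 = 2\nabla_{(\cdot)}h\cdot h$ and the identity above; this should give $\divg X = \big(n + p(\alpha-1)\big)\|h\|^{p(\alpha-1)}$, again matching the Euclidean value $\dive(\|x\|^{p(\alpha-1)}x) = (n+p(\alpha-1))\|x\|^{p(\alpha-1)}$. Then for $u \in C_c^\infty(U)$ I multiply by $|u|^p$, integrate over $U$ (boundary terms vanish since $u$ is compactly supported in the precompact region $U$), and move the divergence onto $|u|^p$:
$$
\big(n + p(\alpha-1)\big)\int_U \|h\|^{p(\alpha-1)}|u|^p\,dV
= -\,p\int_U \|h\|^{p(\alpha-1)}\,|u|^{p-2}u\,\langle \nabla u, h\rangle\,dV.
$$
Bounding $|\langle\nabla u, h\rangle| \le \|h\|\,\|\nabla u\|$ and then applying Hölder's inequality with exponents $p/(p-1)$ and $p$ to the factors $\|h\|^{p(\alpha-1)(p-1)/p}|u|^{p-1}$ and $\|h\|^{\alpha}\|\nabla u\|$ (checking that the weight exponents recombine correctly: $(\alpha-1)(p-1) + \alpha = p(\alpha-1)\cdot\tfrac{p-1}{p}\cdot\tfrac{p}{p-1} + \ldots$, which is exactly the bookkeeping that the homogeneity-type identity guarantees) yields
$$
\int_U \|h\|^{p(\alpha-1)}|u|^p\,dV \;\le\; C\int_U \|h\|^{\alpha p}\|\nabla u\|^p\,dV,
\quad C = \Big(\frac{p}{|n+p(\alpha-1)|}\Big)^p,
$$
which is \eqref{eq1}. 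The constant is finite precisely because $(\alpha-1)p > -n$, a condition already noted in item b of Section 2 as following from \eqref{P}.

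The main obstacle, and the only genuinely Riemannian point, is justifying the divergence computation for $X = \|h\|^{p(\alpha-1)}h$ when $\|h\|$ may vanish somewhere in $U$: for $\alpha < 1$ the weight is singular on the zero set of $h$, so the integration by parts needs care. Here I would argue as in \cite{BOZHKOV}: either the zero set of a nontrivial conformal Killing field is small enough (a submanifold of positive codimension) that one can cut it out with a cutoff and pass to the limit, the error terms vanishing by the compact-support and the integrability condition $(\alpha-1)p > -n$ controlling the volume of small tubes; or one regularizes $\|h\|$ by $(\|h\|^2 + \ve)^{1/2}$, runs the identity for $\ve>0$ where everything is smooth, and sends $\ve \to 0$ by dominated convergence. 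Everything else — the algebra of exponents, Hölder, the vanishing of boundary terms — is routine once the identity $\langle\nabla_v h, v\rangle = \|v\|^2$ is in hand. I would also remark explicitly that neither maximal volume growth nor $Ric \ge 0$ is needed for the Hardy inequality; those hypotheses enter only for the Sobolev endpoint, so this appendix theorem holds under the weaker assumption stated (complete, non-compact, admitting a conformal Killing field).
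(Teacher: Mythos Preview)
Your proposal is correct and follows essentially the same approach as the paper: the paper's proof is merely a pointer to Bozhkov's divergence identity
\[
\divg\!\left(\frac{h}{\epsilon+\|h\|^{k}}\right)
=\frac{\mu}{2}\,\frac{n\epsilon+(n-k)\|h\|^{k}}{(\epsilon+\|h\|^{k})^{2}}
\]
with the choice $k=(1-\alpha)p$, followed by the standard integration-by-parts/H\"older argument from \cite{BOZHKOV}, and your outline reproduces exactly this---your vector field $X=\|h\|^{p(\alpha-1)}h$ is the $\epsilon\to 0$ limit of Bozhkov's regularized field, and the $\ve$-regularization you mention to handle the zero set of $h$ is precisely what the $\epsilon$ in the paper's quoted identity accomplishes.
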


It was stated and proved in $\cite{BOZHKOV}$, the following result
about conformal Killing vector fields: Let $\epsilon$ be an
arbitrary positive real number and $h$ a conformal Killing vector
field, then
$$
  \divg \left(\frac{h}{\epsilon+\|h\|^{k}}\right)=
  \frac{1}{2}\frac{\mu}{(\epsilon+\|h\|^{k})^{2}} \big( n \epsilon + (n-k)\|h\|^{k} \big),
$$
where $k\in\R$. This result is used in that paper to prove a
Caffarelli-Kohn-Nirenberg inequality (particular case) in the Riemannian
setting. In our case, we take $k= (1-\alpha) \, p$ in the above
identity, and the proof of \eqref{eq1} is done using the same
technique that appeared in $\cite{BOZHKOV}$.

\medskip
Now, we state the weighted Sobolev inequality in the Riemannian
setting, and give an original proof of it. One observes that, 
the maximal growth condition is necessary here, since
along the proof we use \textit{standard} Sobolev inequality
(if this condition is not assumed, then the inequality can be
false, see \cite{MINERBE}).

\begin{theorem}
Let $M$ be a complete non-compact Riemannian $n$-manifold with
maximal volume growth, $Ric\geq0$, $n\geq3$ and let $U$ be any
open precompact region in $M$. If $M$ admits a conformal Killing
vector field, then there exists a positive constant C, such that
the following inequality holds for all $u \in C_{c}^{\infty}(U)$
$$
  \Big(\int_{U}\left\|h\right\|^{\alpha p^{*}}|u(x)|^{p^{*}} \, dV \Big)^{1/p^{*}} \, dV
  \leq C \Big(\int_{U}\left\|h\right\|^{\alpha p}\left\|\nabla u(x)\right\|^{p} \, dV
  \Big)^{1/p}.
$$
\end{theorem}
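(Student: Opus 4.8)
The plan is to deduce the stated weighted Sobolev inequality from two ingredients that are already at our disposal: the \emph{unweighted} Sobolev inequality on $U$, which under the present hypotheses ($M$ complete non-compact, $Ric\geq0$, maximal volume growth) holds in the sharp form $\|v\|_{L^{p^{*}}(U)}\leq C_{0}\,\|\nabla v\|_{L^{p}(U)}$ for $v\in C_{c}^{\infty}(U)$ — this is precisely where the pseudo-Poincar\'e type argument of \cite{SALOFF-COSTE} (Theorem 3.3.11) is used to remove the extra lower order term of Theorem 3.3.10 — and the weighted Hardy inequality \eqref{eq1} proved just above. The bridge is the ground-state substitution $v:=\|h\|^{\alpha}|u|$. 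One first reduces to $u\geq0$ by replacing $u$ with $|u|$, which is Lipschitz with $\|\nabla|u|\|=\|\nabla u\|$ a.e.\ and changes neither side of the inequality; then the left-hand side we want is exactly $\|v\|_{L^{p^{*}}(U)}$. Note that $p<n$ here is automatic, since the endpoint $\sigma=\alpha$ forces $n>p$ through \eqref{np}, so $p^{*}$ is well defined.

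The core estimate is pointwise, on $U\setminus\{h=0\}$: from $\nabla v=\alpha\,\|h\|^{\alpha-1}(\nabla\|h\|)\,u+\|h\|^{\alpha}\nabla u$ and the elementary bound $\|\nabla\|h\|\|\leq|\nabla h|$ (Hilbert--Schmidt norm of the covariant derivative), one gets
$$\|\nabla v\|\leq |\alpha|\,\Lambda\,\|h\|^{\alpha-1}u+\|h\|^{\alpha}\|\nabla u\|,\qquad \Lambda:=\sup_{\overline{U}}|\nabla h|<\infty,$$
the finiteness of $\Lambda$ following from $h\in C^{1}$ and $\overline{U}$ compact. Applying the unweighted Sobolev inequality to $v$ (valid for our Lipschitz, compactly supported $v$ by density) and then Minkowski's inequality yields
$$\|v\|_{L^{p^{*}}(U)}\leq C_{0}\,\|\nabla v\|_{L^{p}(U)}\leq C_{0}\big(|\alpha|\,\Lambda\,I_{1}^{1/p}+I_{2}^{1/p}\big),$$
where $I_{1}:=\int_{U}\|h\|^{(\alpha-1)p}u^{p}\,dV$ and $I_{2}:=\int_{U}\|h\|^{\alpha p}\|\nabla u\|^{p}\,dV$. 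Finally, the weighted Hardy inequality \eqref{eq1} gives $I_{1}^{1/p}\leq C_{H}\,I_{2}^{1/p}$, and collecting the constants produces the claim with $C=C_{0}(1+|\alpha|\,\Lambda\,C_{H})$.

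The step I expect to require the most care is the behaviour near the zero set $Z=\{h=0\}$ of the conformal Killing field, where $\|h\|^{\alpha}$ need not be $C^{1}$: the clean route is to run the argument first for $u\in C_{c}^{\infty}(U\setminus Z)$, where $v$ is genuinely Lipschitz and the Sobolev inequality applies directly, and then pass to arbitrary $u\in C_{c}^{\infty}(U)$ by an approximation argument, using that $Z$ has measure zero (being a union of totally geodesic submanifolds of codimension $\geq2$) together with the local integrability of the weights $\|h\|^{\alpha p}$ and $\|h\|^{\alpha p^{*}}$, guaranteed by $\alpha p>-n$ and $\alpha p^{*}>-n$. The only other place the hypotheses are genuinely used is in invoking the sharp unweighted Sobolev inequality: without maximal volume growth it may fail (cf.\ \cite{MINERBE, HEBEY}), so that assumption is essential to the proof and cannot be removed.
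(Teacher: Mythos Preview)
Your argument is correct and follows the same overall architecture as the paper: the ground-state substitution $v=\|h\|^{\alpha}u$, the unweighted Sobolev inequality applied to $v$ (which is exactly where maximal volume growth and $Ric\geq0$ enter, via \cite{SALOFF-COSTE}, Theorem~3.3.11), and then the weighted Hardy inequality \eqref{eq1} to absorb the lower-order term $\int_U\|h\|^{(\alpha-1)p}|u|^p\,dV$.

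The one genuine difference lies in the pointwise control of $\nabla\|h\|$. You use Kato's inequality $\|\nabla\|h\|\|\leq|\nabla h|$ together with the precompactness of $U$ to obtain a finite bound $\Lambda=\sup_{\overline U}|\nabla h|$; this is elementary and does not exploit the conformal Killing structure at all (that hypothesis enters your proof only through \eqref{eq1}). The paper instead proves and uses the identity $h\cdot\nabla\|h\|=\|h\|$, a homogeneity relation in the spirit of \cite{VILLANI}, then argues via the directional derivative $\tilde h\cdot\nabla f$ and an angle bound $|\cos(\tilde h,\nabla f)|\geq K_0>0$ to control $\|\nabla f\|_*$. Your route is shorter and avoids the somewhat delicate angle argument, at the price of a constant that depends on $U$ through $\Lambda$; the paper's route ties the estimate more closely to the conformal Killing geometry. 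Either way the proof closes with \eqref{eq1}, and your treatment of the zero set $Z=\{h=0\}$ by approximation is an appropriate way to handle the regularity issue that the paper leaves implicit.
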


\begin{proof} 1. First, we consider the following result: 

Claim:  If $h$ is a vector field on $M$,
  then for almost all $x \in M$
   \begin{equation}
   \label{eq2}
   h(x) \cdot \nabla(\left\|h(x)\right\|)=\left\|h(x)\right\|,
   \end{equation}
  where the inner product is taking with respect to the
  Riemannian metric $(g_{ij})$. 
   
  Proof of Claim: The proof of \eqref{eq2} follows the ideas in
  Cordero-Eurasquim, Nazaret, Villani \cite{VILLANI}.
  Given $\epsilon>0$, we define a function $\lambda$, such that
   \begin{displaymath}
   \begin{aligned}
   \lambda:(1-\epsilon,1+\epsilon)\longrightarrow &X(M)\\
   t~~~~~~~~\longmapsto &t \, h,
   \end{aligned}
   \end{displaymath}
  where $X(M)$ is the space of vector fields on $M$.
  Observe that, $\lambda$ is differentiable in ($1-\epsilon,1+\epsilon$),
  and $\lambda'(t)= h$. To follow, we define the function $\eta$ as
    $$\eta(t):= \| \lambda(t) \|.$$
  Then, we have for almost all $t$,
    $$\eta'(t)=\nabla\left\|h\right\| \cdot h$$
  On the other hand, using the definition of $\eta$, we obtain
  $\eta'(t)=\left\|h\right\|$. Thus, we obtain
  $$
    h \cdot \nabla\left\|h\right\|=\left\|h\right\|.
  $$

\medskip
2. For any $g \in C_{c}^{\infty}(U)$, we have for each 
open precompact region $U \subset M$
\begin{equation}
\label{SSI}
    \left(\int_{U}|g|^{p^{*}}dV\right)^{1/p^{*}}\leq C\left(\int_{U}\left\|\nabla g \right\|_{*}^{p}dV\right)^{1/p},
\end{equation}
where $C$ is a positive constant and $\left\|\cdot\right\|_{*}$ denotes
the dual norm of a vector field. Also, we 
recall the following simple inequality of real numbers: 
If $a,b\in\R$ and $k\geq1$,
then
\begin{equation}
\label{eq3}
(a+b)^{k}\leq2^{k-1}(a^{k}+b^{k}).
\end{equation}
Now, defining $f(x):= \left\|h\right\|^{\alpha}u(x)$, we have
\begin{equation}
\label{S1}
  \int_{U}|f(x)|^{p^{*}}dV=\int_{U}\left\|h\right\|^{\alpha p^{*}}|u(x)|^{p^{*}} \, dV.
\end{equation}
On the other hand
 \begin{equation}
 \label{GF}
 \nabla f(x)=
 \alpha\left\|h\right\|^{\alpha-1}u(x) \, \nabla(\left\|h\right\|)+\left\|h\right\|^{\alpha}\nabla u(x),
 \end{equation}
and making the inner product between $h$ and $\nabla f(x)$, we
have
$$
   h \cdot \nabla f(x)=\alpha\left\|h\right\|^{\alpha-1}u(x) \, h
   \cdot \nabla\left\|h\right\|+\left\|h\right\|^{\alpha} h \cdot \nabla u(x).
$$
From the above equality and  \eqref{eq2}, it follows that $(h \neq 0$, $h= 0$ is trivial)
 \begin{equation}
 \label{HH}
  \tilde{h} \cdot \nabla f(x) \leq  \alpha\left\|h\right\|^{\alpha-1} \, |u(x)| 
   +\left\|h\right\|^{\alpha} \, \|\nabla u(x)\|,
 \end{equation}
 where $\tilde{h}= h / \|h\|$. Replacing $h$ by $-h$, we do not change the defining of $f$. 
Furthermore, it is not difficult to see that, we also have the estimate
 \begin{equation}
 \label{HH1}
 - \tilde{h} \cdot \nabla f(x) \leq  \alpha\left\|h\right\|^{\alpha-1} \, |u(x)| 
   +\left\|h\right\|^{\alpha} \, \|\nabla u(x)\|.
 \end{equation}
 Moreover, from \eqref{GF} we observe that, there exists $K_0 > 0$, such that
 $$K_0 \leq |\cos(\tilde{h}, \nabla f)|.$$
Consequently, we obtain from \eqref{HH}, \eqref{HH1} 
and the definition of the dual norm
$$
K_0 \,  \left\|\nabla f(x)\right\|_* \leq\alpha\left\|h\right\|^{\alpha-1}|u(x)|
  +\left\|h\right\|^{\alpha}\left\|\nabla u(x)\right\|.
$$
Hence applying \eqref{eq3}, it follows that
\begin{equation}
\label{HHH}
  \left\|\nabla f(x)\right\|_{*}^{p}\leq K_{1}\left\|h\right\|^{(\alpha-1)p}|u(x)|^{p}
  + K_{2}\left\|h\right\|^{\alpha p}|\nabla u(x)|^{p},
\end{equation}
where $K_{1}$ and $K_{2}$ are positive constants. 

\medskip
3. Finally, we integrate \eqref{HHH} on $U$ to obtain
$$
\begin{aligned}
 \Big(\int_{U}\left\|\nabla f\right\|_{*}^{p}dV\Big)^{p^{*}/p}
 \leq &K_{3}  \Big(\int_U \left\|h\right\|^{(\alpha-1)p}|u(x)|^{p}\Big)^{p^{*}/p}
 \\[5pt]
 + &K_{4}\Big(\int_U \left\|h\right\|^{\alpha p}|\nabla u(x)|^{p}\Big)^{p^{*}/p}.
\end{aligned}
$$
In the first integral of the right hand side of the above inequality, we apply the
weighted Hardy inequality \eqref{eq1}, then
\begin{equation}
\label{S2}
 \Big(\int_{U}\left\|\nabla f\right\|_{*}^{p}dV\Big)^{p^{*}/p}\leq K_{5}\Big(\int_U \left\|h\right\|^{\alpha p}|\nabla u(x)|^{p}\Big)^{p^{*}/p}.
\end{equation}
From \eqref{SSI}, \eqref{S1}, and \eqref{S2}, we show the thesis of the theorem, that is
$$\Big(\int_{U}\left\|h\right\|^{\alpha p^{*}}|u(x)|^{p^{*}} \, dV \Big)^{1/p^{*}} \, dV\leq C \, 
\Big(\int_{U}\left\|h\right\|^{\alpha p}\left\|\nabla u(x)\right\|^{p} \, dV \Big)^{1/p}.$$
\end{proof}

\section*{Acknowledgements}

Wladimir Neves is partially supported by CNPq
through the grants 484529/2013-7, 308652/2013-4.


\begin{thebibliography}{99}

\bibitem{AFP1} Abdellaoui, B., Felli, V., Peral, I.,
\emph{Some remarks on systems of elliptic equations doubly
critical in the whole $\R^n$}, Calc. Var., {\bf 34} (2009),
97--137.

\bibitem{AFp2} Abdellaoui, B., Felli, V., Peral, I.,
\emph{A Remark on Perturbed Elliptic Equations of
Caffarelli-Kohn-Nirenberg Type}, Rev. Mat. Complut., {\bf 18} n.2
(2005), 339--351.

\bibitem {Am} Ambrosetti, A., Rabinowit, P.,
\emph{Dual variational methods in critical point theory and
applications}, J. Funct. Anal.,  {\bf 7} (1973), 349--381.



\bibitem {ABWNCKN1} Bazan, A., Neves, W.,
\emph{Hardy-Sobolev type inequalities revisited},
submitted.


\bibitem{BOU} Bouchekif, M., Matallah, A.,
\emph{On singular nonhomogeneous elliptic equations involving
critical Caffarelli-Kohn-Nirenberg exponent}, Ricerche Mat., {\bf
58} (2009), 207--218.

\bibitem{BT} Badiale, M., Tarantello, G.,
\emph{A Sobolev-Hardy inequality with Applications to a Nonlinear
Elliptic Equation arising in astrophysics}, Arch. Rational Mech.
{\bf 163} (2002), 259--293.

\bibitem{BOZHKOV} Bozhkov, Y.,
\emph{A Caffarelli-Kohn-Nirenberg type inequality on Riemannian
manifolds}, Appl. Math. Letters., {\bf 23} (2010), 1166--1169.


\bibitem{CKN1} Caffarelli, L., Kohn, R., Nirenberg, L.,
\emph{First Order interpolation inequalities with weights}, Comp.
Math., {\bf 53} n.3 (1984), 259--275.

\bibitem{CKN2} Caffarelli, L., Kohn, R., Nirenberg, L.,
\emph{Partial regularity of Suitable Weak Solutions of the
Navier-Stokes Equations}, Comm. Pure and App. Math., vol. XXXV
(1982), 771--831.



\bibitem{CIR} Cirmi, G. R., Porzio, M. M.,
\emph{$L^{1}$-solutions for some nonlinear degenerate elliptic and
parabolic equations}, Ann. Mat. Pura Appl., {\bf 169} (1995),
67--86.

\bibitem{CHIPOT} Chipot, M., \emph{Elliptic equations: an introductory course}. Birkhäuser, 2009.

\bibitem{VILLANI} Cordero-Erausquin, D., Nazaret, B., Villani, C.,
\emph{A mass-transportation approach to sharp Sobolev and
Gagliardo-Nirenberg inequalities}., Adv. in Math., {\bf 182}
(2004), 307--332.

\bibitem{LIONS} Dautray, R., Lions, J.-L.,
\emph{Mathematical Analysis and Numerical Methods
for Science and Technology, vol. 1; Physical Origins and Classical
Methods}. Berlin, Heidelberg. New York, Springer, 1985.

\bibitem{FKC} Fabes, E., Kenig, C., Serapioni, R.,
\emph{The local regularity of solutions of degenerate elliptic
operators}, Comm. Partial Differ. Equations, {\bf 7} (1982),
77--116.

\bibitem{HEBEY} Hebey, E., \emph{Nonlinear analysis on manifolds: Sobolev spaces and inequalities}. AMS, Providence, Rodhe Island, 1999.


\bibitem{HK} Horiuchi, T., Kumilin, P.,
\emph{The Caffarelli-Kohn-Nirenberg type inequalities involving
critical and supercritical weights}, Proc. Japan Acad., {\bf 88}
Ser. A (2012), 1--6.





\bibitem{GHERGU} Ghergu, M., Radolescu, V.,
\emph{Singular elliptic problems with lack of compactness}, Annali
di Mat., {\bf 185} (2006), 63--79.



\bibitem{KMP} Kufner, A., Maligranda, L., Persson, L-E.,
\emph{The prehistory of the Hardy inequality}, Am. Math. Monthly,
{\bf 113} 8 (2006), 715--732.





\bibitem{MINERBE} Minerbe, V.,
\emph{Weigthed Sobolev inequalities and Ricci flat manifolds},
GAFA, Geom. Funct. Anal., {\bf 18} (2009), 1696--1749.

\bibitem{MITIDIERI} Mitidieri, E., \emph{A simple approach to Hardy inequalities}. Math. Notes, vol 67, N.4, 2000.


\bibitem{Pa} Passaseo, D.,
\emph{Some concentration phenomena in degenerate semilinear
elliptic problems}, Nonlinear Anal., {\bf 24} (1995), 1011-1025.

\bibitem{RAB} Rabinowitz, P.,
\emph{Variational methods for nonlinear elliptic eigenvalue
problems}, Ind. Univ. Math. J., {\bf 23} (1974), 729--745.

\bibitem{SALOFF-COSTE} Saloff-Coste, L., \emph{Aspects of Sobolev-type inequalities}. Cambridge University Press, 2002.

\bibitem{YZ} Yaotian, S., Zhihui, C., \emph{Hardy-Sobolev
inequalities with general weights and remainder terms}, Act.
Mathematica Scientia, {\bf 28B} (3) (2008), 469--478.

\bibitem{WEI} Shihshu, W. W., Li, J.,
\emph{Generalized sharp Hardy type inequalities and
Caffarelli-Kohn-Nirenberg type inequalities on Riemannian
manifolds}, Tamkang J. of Math., {\bf 40} n.4 (2009), 401--413.




\bibitem{YB} Yano, K., Bochner, C., \emph{Curvature and the Betti Numbers}. Princeton Univ. Press, Princeton, 1953.

\end{thebibliography}
\end{document}